\def\BibTeX{{\rm B\kern-.05em{\sc i\kern-.025em b}\kern-.08em T\kern-.1667em\lower.7ex\hbox{E}\kern-.125emX}}
\newtheorem{theorem}{Theorem}
\newtheorem{lemma}{Lemma}
\newtheorem{fact}{Fact}
\begin{document}

\title{\LARGE \bf Shortest Paths of Bounded Curvature for the Dubins Interval Problem}

\author{Satyanarayana Manyam$^{1}$, Sivakumar Rathinam$^{2}$, David Casbeer$^{3}$ and Eloy Garcia$^{4}$%
\thanks{$^{1}$National Research Council Fellow, Air Force Research Laboratory, Dayton-OH, 45433,}%
\thanks{$^{2}$Assistant Professor, Mechanical Engineering, Texas A \& M University, College Station, TX-77843, }%
\thanks{$^{3}$Research Scientist, Air Force Research Laboratory, Dayton-OH, 45433,}%
\thanks{$^{4}$Research Scientist, Infoscitex Corp., Dayton-Ohio, 45431.}%
}
\markboth{}
{Murray and Balemi: Using the Document Class IEEEtran.cls} 


\maketitle
\thispagestyle{empty}
\pagestyle{empty}

\begin{abstract}
The Dubins interval problem aims to find the shortest path of bounded curvature between two targets such that the departure angle from the first target and the arrival angle at the second target are constrained to two respective intervals.  We propose a new and a simple algorithm to this problem based on the minimum principle of Pontryagin.
\end{abstract}

\section{Introduction}

Path planning problems involving Dubins vehicles have received significant attention in the literature due to their applications involving unmanned vehicles\cite{LeeDubins,Medeiros2010,Orient2014,macharet2013efficient,macharet2012data,sujit2013route,kenefic2008finding,macharet2011nonholonomic,Ozguner2005,ketan_2008,Rathinam_2007_IEEETASE,le2012dubins,ma2006receding}. A Dubins vehicle \cite{Dubins1957} is a vehicle that travels at a  constant speed and has a lower bound on the radius of curvature at any point along its path. The basic problem of finding a shortest path for a vehicle from a point  at $(x_1,y_1)$ with heading $\theta_1$ to a point at $(x_2,y_2)$ with heading $\theta_2$ was solved by Dubins in \cite{Dubins1957}, and later by authors in \cite{boissonat,bui} using Pontryagin's minimum principle\cite{pontryagin}. This article considers a generalization of this standard problem called the Dubins Interval Problem and is stated as follows: Given two targets located at $(x_1,y_1)$ and $(x_2,y_2)$, respectively, on a plane, a closed interval $\Theta_1$ of departure angles from target 1, and a closed interval $\Theta_2$ of arrival angles at target 2, find a departure angle $\theta_1 \in \Theta_1$, an arrival angle $\theta_2 \in \Theta_2$ and a path from $(x_1,y_1,\theta_1)$ to $(x_2,y_2,\theta_2)$ such that the radius of curvature at any point in the path is lower bounded by $\rho$ and the length of the path is a minimum (refer to Fig. \ref{fig:dI}).

\begin{figure}[h!]
\centering{}
\includegraphics[width=3in]{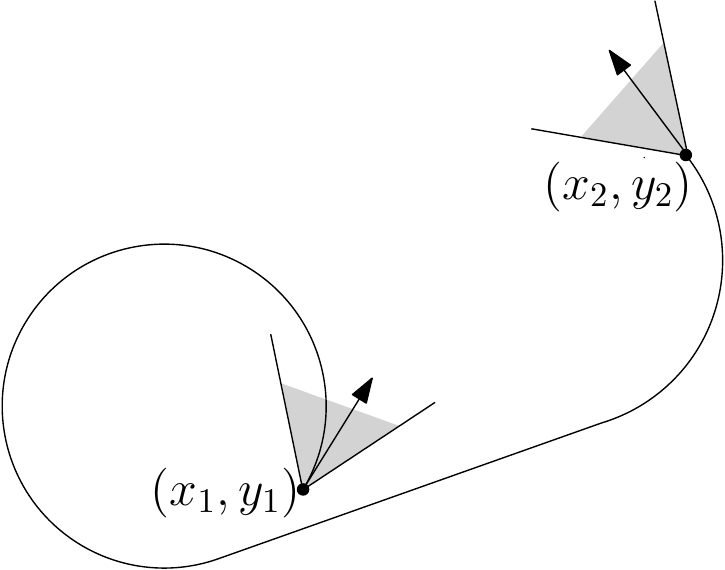}
\caption{A feasible solution to the Dubins Interval Problem.}
\label{fig:dI}
\end{figure}

Variants of the Dubins interval problem arise in search and attack problems\cite{garcia2014cooperative,garcia2015active} where a vehicle must reach a target such that the arrival angle of the vehicle at the target must be within given bounds. The Dubins interval problem also arises while lower bounding Traveling Salesman Problems (TSPs) involving Dubins vehicles\cite{Rathinam_lb2015}. In \cite{Rathinam_lb2015}, the lower bounding problem was posed as a generalized TSP where the cost of traveling between any two nodes requires one to solve the Dubins interval problem. The Dubins interval problem was solved using calculus and some monotonicity properties of the optimal paths in \cite{Rathinam_lb2015}. In this article, we give a simple and a direct algorithm using Pontryagin's minimum principle\cite{pontryagin}. The application of the minimum principle leads to an additional set of complementary slackness conditions corresponding to the angle constraints at the targets. The key contribution of this article lies in interpreting these conditions and characterizing an optimal solution to the Dubins interval problem. Similar to solving the standard Dubins problem, an optimal solution to the Dubins interval problem can be obtained by simply comparing the length of few candidate solutions. After proving the main result for the Dubins interval problem, we also solve the special case of the Dubins interval problem where the departure angle is given at target 1 while a closed interval of angles is given at target 2. In this special case, the objective is to find an optimal Dubins path and the corresponding arrival angle at target 2.

\section{Notations}
 The two targets lie on a ground plane and the motion of the vehicle also occurs on the same plane. As commonly assumed, any angle measured with respect to the $x$-axis in the counterclockwise direction is positive. The interval $\Theta_k$ at target $k$ is defined as $\Theta_k = [\theta^{min}_k,\theta^{max}_k]\subseteq [0,2\pi]$ with $\theta^{min}_k<\theta^{max}_k$ for $k=1,2$. Given an initial configuration $(x_{1},y_{1},\theta_{1})$ and a final configuration $(x_{2},y_{2},\theta_{2})$, L.E. Dubins \cite{Dubins1957} showed that the shortest path for a vehicle to travel between the two configurations subject to the minimum turning radius ($\rho$) constraint must consist of at most three segments where each segment is a circle of radius $\rho$ or a straight line. In particular, if a curved segment of radius $\rho$ along which the vehicle travels in a counterclockwise (clockwise) rotational motion is denoted by $L(R)$, and the segment along which the vehicle travels straight is denoted by $S$, then the shortest path is one of $RSR$, $RSL$, $LSR$, $LSL$, $RLR$ and $LRL$ or a degenerate form of these paths. For example, the degenerate forms of $RSL$ are $S$, $L$, $R$, $RS$, $SL$ and $RL$. We also subscript a curved segment in some places ($L_\psi$ or $R_\psi$) to indicate the angle of turn ($\psi$) in the curved segment. In the results stated in the ensuing sections, the open interval $(\theta_k^{min}, \theta_k^{max})$ is denoted as $\Theta_k^o$ for $k=1,2$.

\section{Main Result}

\begin{theorem}\label{theorem:main}
Any shortest path which is $C^1$ and piecewise $C^2$ of bounded curvature between the two targets with the departure angle $\theta_d \in \Theta_1 =[\theta_1^{min},\theta_1^{max}]$
 at target 1 and the arrival angle $\theta_a\in \Theta_2 = [\theta_2^{min},\theta_2^{max}]$
 at target 2 must be one of the following or a degenerate form of these: \\
\begin{enumerate}[{Case} 1:]
\item $S$ or $L_\psi$ or $R_\psi$ or $L_\psi R_\psi$ or $R_\psi L_\psi$ with $\psi> \pi$. \\
\item $\theta_d=\theta_1^{max}$ and $\theta_a=\theta_2^{max}$ and the path is $LSR$.
\item $\theta_d=\theta_1^{max}$ and $\theta_a=\theta_2^{min}$ and the path is either $LSL$ or $LR_\psi L$ with $\psi>\pi$.
\item $\theta_d=\theta_1^{min}$ and $\theta_a=\theta_2^{min}$ and the path is $RSL$.
\item $\theta_d=\theta_1^{min}$ and $\theta_a=\theta_2^{max}$ and the path is either $RSR$ or $RL_\psi R$ with $\psi>\pi$.\\

\item $\theta_d=\theta_1^{max}$ and $\theta_a \in \Theta_2^o$ and the path is either $LS$ or $LR_\psi $ with $\psi >\pi$.
\item $\theta_d=\theta_1^{min}$ and $\theta_a \in \Theta_2^o$ and the path is either $RS$ or $RL_\psi$ with $\psi >\pi$. \\
\item $\theta_d \in \Theta_1^o$ and $\theta_a = \theta_2^{max}$ and the path is either $SR$ or $L_\psi R$ with $\psi >\pi$.
\item $\theta_d \in \Theta_1^o$ and $\theta_a = \theta_2^{min}$ and the path is either $SL$ or $R_\psi L$ with $\psi >\pi$. \\
\end{enumerate}
\end{theorem}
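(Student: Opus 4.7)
The plan is to cast the Dubins interval problem as a minimum-time optimal control problem with state $(x,y,\theta)$, control $u\in[-1,1]$, and dynamics $\dot x=\cos\theta$, $\dot y=\sin\theta$, $\dot\theta=u/\rho$, and then apply Pontryagin's minimum principle with the two endpoint-angle intervals handled via KKT-type transversality. The Hamiltonian
\[
H=1+\lambda_x\cos\theta+\lambda_y\sin\theta+\frac{\lambda_\theta u}{\rho}
\]
has $\lambda_x,\lambda_y$ constant, and $H\equiv 0$ along the optimum because the problem is autonomous with free final time. Pointwise minimization gives $u=-\mathrm{sgn}(\lambda_\theta)$ when $\lambda_\theta\neq 0$, which forces an $L$-arc ($u=+1$) when $\lambda_\theta<0$, an $R$-arc ($u=-1$) when $\lambda_\theta>0$, and a singular straight segment ($u=0$) when $\lambda_\theta\equiv 0$ on an interval. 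This is the standard route to the classical Dubins $CSC/CCC$ families and their switching structure.

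Using $H=0$ I would solve for $\lambda_\theta$ in closed form on each arc, obtaining $\lambda_\theta(\theta)=-\rho\,f(\theta)$ on an $L$-arc and $\lambda_\theta(\theta)=\rho\,f(\theta)$ on an $R$-arc, where $f(\theta)=1+\lambda_x\cos\theta+\lambda_y\sin\theta$ is a shifted sinusoid of mean $1$. Because the sign constraint on $\lambda_\theta$ forces $f\ge 0$ on every $C$-arc, an arc whose two endpoints both satisfy $\lambda_\theta=0$ must span an entire $f\ge 0$ lobe of one period of $f$; since the mean of $f$ is strictly positive, that lobe has length strictly greater than $\pi$. This is the structural reason for every ``$\psi>\pi$'' in the statement. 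Next I would derive the interval transversality conditions: $\lambda_\theta(0)=0$ for $\theta_d\in\Theta_1^o$, $\lambda_\theta(0)\le 0$ for $\theta_d=\theta_1^{max}$, $\lambda_\theta(0)\ge 0$ for $\theta_d=\theta_1^{min}$, and the sign-flipped conditions at $t=T$: $\lambda_\theta(T)=0$ for $\theta_a\in\Theta_2^o$, $\lambda_\theta(T)\ge 0$ for $\theta_a=\theta_2^{max}$, and $\lambda_\theta(T)\le 0$ for $\theta_a=\theta_2^{min}$. Combined with the sign rule for $u$, these fix the admissible types of the first and last segments: e.g.\ $\theta_d=\theta_1^{max}$ forces an initial $L$ or $S$, an interior $\theta_d$ forces an initial $S$ or a $C$-arc that must begin at a zero of $f$, and dually at $t=T$.

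I would then enumerate the nine combinations of interior/extreme boundary angles and intersect the resulting first/last-segment constraints with the Dubins families. For Cases 2--5 (both angles extreme), only one $CSC$ type survives per case (e.g.\ $LSR$ when both are $\theta^{max}$), together with the $CCC$ sibling whose middle $C$-arc must span an $f\ge 0$ lobe and hence has $\psi>\pi$. For Cases 6--9 (one extreme, one interior) the interior side forces the trailing (or leading) $C$-arc to end at a zero of $f$, yielding either a $CS$ path or a two-arc path $C\,C_\psi$ with $\psi>\pi$. For Case 1 (both interior), $\lambda_\theta(0)=\lambda_\theta(T)=0$ kills every $CSC$ and $CCC$ family except those whose boundary $C$-arcs are either absent (leaving the straight line $S$) or each span a full $f\ge 0$ lobe, giving $L_\psi$, $R_\psi$, $L_\psi R_\psi$, and $R_\psi L_\psi$ with $\psi>\pi$. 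The main obstacle I expect is the bookkeeping in this last case and in the $CCC$ siblings of Cases 2--5: one must rule out every other Dubins combination by using that $f$ has at most two zeros per period, so the positions and lengths of admissible $C$-arcs are tightly constrained and all degenerate pieces collapse exactly to the listed forms, completing the classification.
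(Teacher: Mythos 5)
Your route is essentially the paper's: cast the problem as a free-final-time optimal control problem, apply the minimum principle with multipliers for the four angle inequalities, read off the sign constraints on $\lambda_\theta(0)$ and $\lambda_\theta(T)$ for the nine interior/boundary combinations (your transversality table is exactly Lemma \ref{lemma:table1}), and then use the adjoint structure ($u^*=-\mathrm{sign}(\lambda_\theta)$, zeros of $\lambda_\theta$, arcs between two zeros turning more than $\pi$) to enumerate candidates. Your closed form $\lambda_\theta=\mp\rho f(\theta)$ with $f(\theta)=1+v_o(\lambda_x\cos\theta+\lambda_y\sin\theta)$ is a self-contained derivation of Facts \ref{fact1}--\ref{fact4}, which is fine. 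The genuine gap is in the elimination step you yourself flag as the ``main obstacle'': the necessary conditions, together with the observation that $f$ has at most two zeros per period, do \emph{not} rule out the three-arc competitors. In Case 1, a path $L_\psi R_\psi L_\psi$ in which every arc spans the full $f\ge 0$ lobe (so each $\psi>\pi$) satisfies every condition you derive: all four zeros of $\lambda_\theta$ lie on the zero line, the junction headings alternate between the two zeros of $f$, and such snaking configurations are geometrically realizable. Likewise, for the sign pattern $\lambda_\theta(0)<0,\ \lambda_\theta(T)=0$ (Case 6) a path $LR_\psi L_{\psi'}$ with both the middle and final arcs exceeding $\pi$ is fully compatible with the adjoint analysis. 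Discarding these requires a separate optimality (shortening) argument --- the paper imports from \cite{bui} the fact that a path containing two curved segments each of length greater than $\pi\rho$ cannot be a shortest path --- and no amount of zero-counting for $f$ substitutes for it. Without naming and using that result (or proving an equivalent shortening lemma), your classification is strictly weaker than the theorem.

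A smaller slip: in Cases 2 and 4 there is no $CCC$ ``sibling'' at all, since $LRL$ ends in $L$ and $RLR$ begins with $R$, contradicting the required first and last segment types for the sign patterns $\lambda_\theta(0)<0,\lambda_\theta(T)>0$ and $\lambda_\theta(0)>0,\lambda_\theta(T)<0$; the candidate lists there are just $LSR$ and $RSL$ (plus degeneracies). Your own first/last-segment logic would catch this, but the blanket claim of a $CCC$ sibling for all of Cases 2--5 should be removed. Similarly, the ``mean of $f$ is positive'' justification for the lobe length is loose; the clean statement is that $f=1+R\cos(\theta-\phi)\ge 0$ is the set $\cos(\theta-\phi)\ge -1/R$ with $-1/R<0$, whose length $2\arccos(-1/R)$ exceeds $\pi$, which is exactly Fact \ref{fact3}.
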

\textbf{Remark:} Note that in cases 1 and 6-9, the departure and arrival angles are implicitly specified by each of the paths. For example, if $L_\psi R_\psi$ with $\psi>\pi$ exists between the two targets, as the length of the segment $L$ is equal to the length of the segment $R$, the departure and arrival angles are simply specified by geometry (we will later discuss this in the proofs; refer to Fig.\ref{fig:CC}). Similarly, if $\theta_d=\theta_1^{max}$ (case 6), the arrival angle at target 2 is determined by the $LS$ or $LR$ paths. The only remaining part would be to check if the arrival angle at target 2 lies in the interval $[\theta_2^{min},\theta_2^{max}]$. If it does, then the corresponding path is a candidate for an optimal solution to the Dubins interval problem.

\section{The minimum principle}
Let $v_o$ be the speed of the vehicle, and $u(t)$ denote the control input for the vehicle at time $t$. Let $x(t),y(t),\theta(t)$ denote the position and angle coordinates of the vehicle as a function of time on the ground plane. At time $t=0$, let the vehicle be located at $(x_1,y_1)$. Let the travel time of the vehicle be denoted by $T$. Note that $\theta_d = \theta(0)$ and $\theta_a = \theta(T)$. The Dubins interval problem can be formulated as an optimal control problem as follows:

\begin{align}
\min_{u(t)\in [-1,1]} &\int_0^T 1 dt
\end{align}
subject to
\begin{align}
\frac{dx}{dt} &= v_o \cos \theta, \nonumber \\
\frac{dy}{dt} &= v_o \sin \theta, \nonumber \\
\frac{d \theta}{dt} &= \frac{u}{\rho},  \label{eq:kinematics}
\end{align}
and the following boundary conditions:
\begin{align}
x(0)=x_1, &~  x(T)=x_2, \\
y(0)=y_1, &~  y(T)=y_2,
\end{align}

\begin{align}
\theta_{1}^{min}-\theta(0)\leq 0, \label{eq:theta0_1}\\
\theta(0) - \theta_{1}^{max} \leq 0,\\
\theta_{2}^{min}-\theta(T)\leq 0, \\
\theta(T) - \theta_{2}^{max} \leq 0. \label{eq:thetaT_2}
\end{align}

Let the adjoint variables associated with $p(t)=(x(t),y(t),\theta(t))$ be denoted as $\Lambda(t)=$ $(\lambda_x(t),\lambda_y(t),\lambda_{\theta}(t))$. The Hamiltonian associated with above system is defined as:
\begin{align}
H(\Lambda,p,u) = 1 + v_o \cos \theta \lambda_x + v_o \sin \theta \lambda_y + \frac{u}{\rho} \lambda_{\theta},
\end{align}
and the differential equations governing the adjoint variables are defined as:
\begin{align}
\frac{d\lambda_x}{dt} &= 0, \nonumber \\
\frac{d\lambda_y}{dt} &= 0, \nonumber \\
\frac{d \lambda_{\theta}}{dt} &= v_o \sin \theta \lambda_x - v_o \cos \theta \lambda_y. \label{eq:lambda}
\end{align}

Applying the fundamental theorem of Pontryagin\cite{pontryagin} to the above problem, we obtain the following: If $u^*$ is an optimal control to the Dubins interval problem, then there exists a non-zero adjoint vector $\Lambda(t)$ and $T>0$ such that $p(t),\Lambda(t)$ being the solution to the equations in \eqref{eq:kinematics} and \eqref{eq:lambda} for $u(t) = u^*(t)$, the following conditions must be satisfied: \\

\begin{itemize}
\item  $\forall t\in [0,T]$, $H(\Lambda,p,u^*) \equiv \min_{u\in [-1,1]}H(\Lambda,p,u)$.
\item $\forall t\in [0,T]$, $H(\Lambda,p,u^*) \equiv 0$.
\item Suppose $\alpha_1,\alpha_2,\beta_1,\beta_2$ are the Lagrange multipliers corresponding to the boundary conditions in \eqref{eq:theta0_1}-\eqref{eq:thetaT_2} respectively. Then, we have,
    \begin{align}
    \alpha_1,\alpha_2,\beta_1,\beta_2  \geq 0, \label{eq:dual}\\
    \alpha_1(\theta_{1}^{min}-\theta(0)) =  0, \label{eq:alpha1} \\
\alpha_2(\theta(0) - \theta_{1}^{max}) = 0, \label{eq:alpha2}\\
\beta_1(\theta_{2}^{min}-\theta(T)) = 0, \label{eq:beta1}\\
\beta_2(\theta(T) - \theta_{2}^{max}) = 0, \label{eq:beta2}\\
\lambda_{\theta}(T) = \beta_2-\beta_1, \label{eq:lambdaT}\\
\lambda_{\theta}(0) = \alpha_1-\alpha_2. \label{eq:lambda0}
    \end{align}
\end{itemize}

Given a departure angle at target 1 and an arrival angle at target 2, the following facts are known for the basic Dubins problem in \cite{boissonat},\cite{bui}. We will use them in our proofs later. \\
\begin{fact}\label{fact1}
Consider any point $P$ on an optimal path which is either an inflexion point of the path (point joining two curved segments or a point joining a curved segment and a straight line) or any point on a straight line segment of the path. Suppose the vehicle crosses this point at time $t\in[0,T]$. Then, $\lambda_{\theta}(t)=0$. \\
\end{fact}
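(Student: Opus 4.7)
The plan is to exploit the linearity of the Hamiltonian $H$ in the control $u$. Since $H(\Lambda,p,u) = 1 + v_o\cos\theta\,\lambda_x + v_o\sin\theta\,\lambda_y + \frac{u}{\rho}\lambda_\theta$ depends affinely on $u$, the pointwise minimization condition $H(\Lambda,p,u^*)\equiv \min_{u\in[-1,1]}H(\Lambda,p,u)$ is bang-bang: when $\lambda_\theta(t)>0$ the unique minimizer is $u=-1$ (an $R$-arc), when $\lambda_\theta(t)<0$ the unique minimizer is $u=+1$ (an $L$-arc), and only when $\lambda_\theta(t)=0$ can the minimum be attained at an interior value of $u\in(-1,1)$; in particular, at $u=0$ (an $S$-segment). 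Equivalently, whenever $u^*(t)$ takes a value strictly interior to $[-1,1]$, one must have $\lambda_\theta(t)=0$.

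With this characterization, first I would handle points on a straight-line segment: on any maximal $S$-subarc of the optimal path, $u^*\equiv 0$, which is interior to $[-1,1]$, so the preceding paragraph forces $\lambda_\theta(t)\equiv 0$ throughout that segment, giving the claim for any interior point $P$ of the straight segment. For an inflexion point at the junction of a curved segment with a straight segment, $\lambda_\theta$ already vanishes on the adjacent $S$-subarc by what was just shown, and continuity of $\lambda_\theta$---which follows from its defining linear ODE \eqref{eq:lambda} having a right-hand side that is continuous in $t$ because $\theta(t)$ is continuous (the path being $C^1$)---forces $\lambda_\theta(t)=0$ at the junction as well. For an inflexion point joining two curved segments of opposite rotation (an $LR$ or $RL$ junction, as in $RLR$, $LRL$, $RSL$, etc.), $u^*$ jumps between $+1$ and $-1$ at time $t$; by the bang-bang characterization, $\lambda_\theta$ is $\le 0$ on one side of $t$ and $\ge 0$ on the other, and continuity then gives $\lambda_\theta(t)=0$.

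There is no substantive obstacle here; the proof is essentially bang-bang plus continuity of the adjoint. The one fine point that must be argued carefully is that the optimality of the non-extreme value $u^*=0$ genuinely forces $\lambda_\theta=0$, rather than merely being consistent with it. This is ensured by the strict affineness of $H$ in $u$: whenever $\lambda_\theta\neq 0$, the minimum of $H$ on $[-1,1]$ is attained uniquely at an endpoint, so $u^*=0$ cannot be a minimizer---hence a nonzero $\lambda_\theta$ is incompatible with either an $S$-segment or an inflexion of the types listed, proving the fact in all cases.
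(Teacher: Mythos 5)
Your proof is correct. Note that the paper itself does not prove this fact; it is stated as a known result for the standard Dubins problem and attributed to the cited references, where it is established by exactly the argument you give: the Hamiltonian is affine in $u$, so a nonzero $\lambda_\theta$ forces the bang-bang control $u^*=-\mathrm{sign}(\lambda_\theta)$ (the paper's Fact~4), hence $u^*=0$ on a straight segment forces $\lambda_\theta=0$ there, and continuity of the adjoint handles the junctions (including the $LR$/$RL$ switches, where the sign of $\lambda_\theta$ must change). Your attention to why an interior minimizer genuinely forces the coefficient of $u$ to vanish is the right fine point to flag, and your treatment of all three types of points $P$ is complete.
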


\begin{fact}\label{fact2}
All the points of an optimal path where $\lambda_{\theta}(t)=0$ lie on the same straight line. \\
\end{fact}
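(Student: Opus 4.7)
The plan is to exhibit an explicit linear function of the spatial coordinates $(x(t),y(t))$ that differs from $\lambda_\theta(t)$ by at most an additive constant; then the vanishing locus of $\lambda_\theta$ pulls back to the intersection of the trajectory with a single line in the plane.

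First, I would use the adjoint equations displayed in \eqref{eq:lambda} to conclude that $\lambda_x$ and $\lambda_y$ are constants along the optimal trajectory. Next, combining this with the kinematics $\dot x = v_o\cos\theta$ and $\dot y = v_o\sin\theta$, I would rewrite the remaining adjoint equation as
$$\frac{d\lambda_\theta}{dt} \;=\; v_o \sin\theta\,\lambda_x - v_o\cos\theta\,\lambda_y \;=\; \lambda_x\,\dot y - \lambda_y\,\dot x \;=\; \frac{d}{dt}\bigl[\lambda_x\, y(t) - \lambda_y\, x(t)\bigr].$$
Integrating in $t$ yields $\lambda_\theta(t) = \lambda_x\, y(t) - \lambda_y\, x(t) + C$ for some constant $C$ fixed by the initial conditions. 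Consequently, $\lambda_\theta(t)=0$ holds precisely when the spatial point $(x(t),y(t))$ satisfies the linear equation $\lambda_x\, y - \lambda_y\, x + C = 0$, which, whenever $(\lambda_x,\lambda_y)\neq (0,0)$, is the equation of a single straight line in the ground plane. Since this line does not depend on $t$, every time-instant at which $\lambda_\theta$ vanishes contributes a point on the same line, proving the claim in the nondegenerate case.

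The only remaining subtlety is to dispose of the degenerate case $\lambda_x = \lambda_y = 0$, which I would address at the end. In that case the identity above collapses and $\dot\lambda_\theta \equiv 0$, so $\lambda_\theta$ is constant along the trajectory. If that constant were zero, the Hamiltonian would simplify to $H \equiv 1$, contradicting the Pontryagin condition $H\equiv 0$. Hence $\lambda_\theta$ is a nonzero constant and the set of points where $\lambda_\theta(t)=0$ is empty, so the statement holds vacuously. No part of the argument looks technically hard; the whole proof amounts to one integration plus a short case check, and the main thing to be careful about is not to overlook the degenerate case that would otherwise invalidate the division implicit in ``the line.''
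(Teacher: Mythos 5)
Your proof is correct. The paper does not actually prove Fact \ref{fact2}; it imports it as a known result for the standard Dubins problem from \cite{boissonat},\cite{bui}, and the argument you give --- integrating \eqref{eq:lambda} using the constancy of $\lambda_x,\lambda_y$ to obtain $\lambda_\theta(t)=\lambda_x y(t)-\lambda_y x(t)+C$, so that the zero set of $\lambda_\theta$ is the trajectory's intersection with a fixed line --- is precisely the standard one from those references. Your handling of the degenerate case $\lambda_x=\lambda_y=0$ via the condition $H\equiv 0$ (which forces $|\lambda_\theta|=\rho\neq 0$, making the claim vacuous) is also the right way to close the gap.
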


\begin{fact}\label{fact3}
Consider any curved segment of an optimal path. Let the times $t_1,t_2$ be such that $0\leq t_1<t_2\leq T$, the vehicle is located on the curved segment at times $t_1,t_2$, and $\lambda_{\theta}(t_1)=\lambda_\theta(t_2)=0$. Then, the length of the curved segment between the times $t_1$ and $t_2$ must be greater than $\pi\rho$. \\
\end{fact}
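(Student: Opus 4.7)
My plan is to reduce the problem to the evolution of a single angular parameter on the curved segment, and then to exploit the Hamiltonian identity together with the minimization condition at the two endpoints where $\lambda_\theta$ vanishes. Without loss of generality take the segment to be an $L$ arc, so the control is constant $u\equiv +1$ (the $R$ case is handled by the obvious reflection). Since $\lambda_x,\lambda_y$ are constants of motion along the entire trajectory by \eqref{eq:lambda}, I write them in polar form as $(r\cos\phi, r\sin\phi)$ and introduce the shifted heading $\psi(t):=\theta(t)-\phi$, for which $\dot\psi=1/\rho$ on the arc. The $\lambda_\theta$ equation then collapses to $\dot\lambda_\theta = v_o r\sin\psi$, and integrating from $t_1$ with $\lambda_\theta(t_1)=0$ yields the closed form $\lambda_\theta(t)=v_o r\rho(\cos\psi_1-\cos\psi(t))$, where $\psi_1:=\psi(t_1)$.

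Next I pin down $\psi_1$ by combining two constraints. The Hamiltonian identity $H\equiv 0$ evaluated at $t_1$ (using $\lambda_\theta(t_1)=0$) reduces to $1+v_o r\cos\psi_1=0$, giving $\cos\psi_1=-1/(v_o r)$ and incidentally forcing $v_o r\ge 1$. The minimum-principle optimality of $u=+1$ on the arc demands $\lambda_\theta(t)\le 0$ for $t\in(t_1,t_2)$, equivalently $\cos\psi(t)\ge\cos\psi_1$ throughout the interval. Evaluating the derivative at $t_1^+$ gives $\dot\lambda_\theta(t_1)=v_o r\sin\psi_1\le 0$, so $\sin\psi_1\le 0$. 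Combined with $\cos\psi_1\in[-1,0]$, this localizes $\psi_1$ in $[\pi,3\pi/2]$, and I can write $\psi_1=\pi+\alpha$ for some $\alpha\in[0,\pi/2)$, where the strict bound $\alpha<\pi/2$ reflects the finiteness of $v_o r$.

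I then track the forward evolution $\psi(t)=\psi_1+(t-t_1)/\rho$ and identify the next admissible zero of $\lambda_\theta$. On $[\pi+\alpha,2\pi]$ the cosine rises from $\cos\psi_1$ to $1$, and on $[2\pi,3\pi-\alpha]$ it falls back to $\cos\psi_1$; throughout, the inequality $\cos\psi(t)\ge\cos\psi_1$ remains in force, so $\lambda_\theta\le 0$ is never violated. The symmetric candidate $\psi=2\pi-\psi_1=\pi-\alpha$ lies strictly before $\psi_1$ in forward time and is excluded, so the first forward solution of $\cos\psi=\cos\psi_1$ is $\psi_2=3\pi-\alpha$, yielding $\psi_2-\psi_1=2\pi-2\alpha>\pi$. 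Since on a turning circle of radius $\rho$ the arc length equals $\rho$ times the angular sweep, the curved length between $t_1$ and $t_2$ is $\rho(2\pi-2\alpha)>\pi\rho$.

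The main obstacle I anticipate is the step that localizes $\psi_1$ in the third quadrant. The equation $\cos\psi_1=-1/(v_o r)$ has two solutions in $[0,2\pi)$, one in the second and one in the third quadrant, and a priori both appear permissible; only the one-sided differential inequality $\sin\psi_1\le 0$, read off from the optimality of $u=+1$ immediately after $t_1$, rules out the second-quadrant branch. Once this is secured the remaining cosine bookkeeping is routine, and the strict inequality $>\pi\rho$ drops out of the strict inequality $\alpha<\pi/2$.
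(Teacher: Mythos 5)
Your argument is correct: the paper itself does not prove Fact 3 but imports it from \cite{boissonat,bui}, and your derivation --- writing $(\lambda_x,\lambda_y)$ in polar form, integrating $\dot\lambda_\theta$ to get $\lambda_\theta=v_o r\rho(\cos\psi_1-\cos\psi)$, using $H\equiv 0$ to fix $\cos\psi_1=-1/(v_o r)$, and using the sign condition on $\lambda_\theta$ forced by $u=+1$ to select the third-quadrant branch and obtain a sweep of $2\pi-2\alpha>\pi$ --- is precisely the standard Pontryagin-based proof given in those references. The only cosmetic point is that the identification of arc length with $\rho$ times the angular sweep implicitly normalizes $v_o=1$, which is consistent with the paper's time-equals-length objective.
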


\begin{fact}\label{fact4}
For any $t\in [0,T]$, the optimal control $u^*(t)=-sign(\lambda_\theta(t))$ if $\lambda_\theta(t)\neq 0$.  \\
\end{fact}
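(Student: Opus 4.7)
The plan is to case-split exhaustively on whether each of $\theta_d$ and $\theta_a$ lies in the interior or at the upper/lower boundary of its interval, matching the nine cases in the statement. In every case I translate the complementary slackness conditions \eqref{eq:alpha1}--\eqref{eq:beta2} via the transversality identities \eqref{eq:lambda0}--\eqref{eq:lambdaT} into sign information on the adjoint $\lambda_\theta$ at $t=0$ and $t=T$: $\theta_d\in \Theta_1^o$ forces $\alpha_1=\alpha_2=0$ and hence $\lambda_\theta(0)=0$; $\theta_d=\theta_1^{max}$ forces $\alpha_1=0$ and hence $\lambda_\theta(0)=-\alpha_2\le 0$; $\theta_d=\theta_1^{min}$ forces $\alpha_2=0$ and hence $\lambda_\theta(0)=\alpha_1\ge 0$; and analogously at $t=T$ using $\beta_1,\beta_2$.

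Once the sign of $\lambda_\theta$ at an endpoint is fixed, Fact 4 pins down the type of the adjacent segment: $\lambda_\theta<0$ forces $u=+1$ (an $L$-arc), $\lambda_\theta>0$ forces $u=-1$ (an $R$-arc), and $\lambda_\theta=0$ additionally permits a straight $S$. This determines the allowed first and last segments in each of Cases 2--9 (for instance, Case 3 with $\theta_d=\theta_1^{max}$ and $\theta_a=\theta_2^{min}$ forces both first and last to be $L$, leaving only $LSL$ and $LRL$ among the six Dubins skeletons). I then invoke the classical Dubins classification that any extremal is CSC, CCC, or a degenerate form thereof, and retain the skeletons whose first and last segments match the forced types. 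Cases 4--5, 7 and 9 follow from Cases 2--3, 6 and 8 via the obvious $L\leftrightarrow R$ and lower/upper-boundary reflection symmetry of the problem.

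The ``$\psi>\pi$'' qualifiers are then supplied by Fact 3. Whenever an endpoint $t=0$ or $t=T$ carries $\lambda_\theta=0$ and lies on a curved segment, that endpoint pairs with the next (or previous) inflection point at which Fact 1 also gives $\lambda_\theta=0$; applying Fact 3 to this pair of times on a common arc forces the arc length to exceed $\pi\rho$. This covers the qualifier on $L_\psi$, $R_\psi$, $L_\psi R_\psi$ and $R_\psi L_\psi$ in Case 1, on the outer arcs of $LR_\psi L$ and $RL_\psi R$ in Cases 3 and 5, and on the single arcs of $LR_\psi$, $RL_\psi$, $L_\psi R$ and $R_\psi L$ in Cases 6--9. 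For the middle arcs of the CCC candidates (Cases 3 and 5), Facts 1 and 3 applied between the two inflection points of the middle arc give the same $\psi>\pi$ bound.

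The main obstacle I anticipate is the Case 1 exclusion of CSC and CCC skeletons: with $\lambda_\theta(0)=\lambda_\theta(T)=0$, the sign-based reading permits all six Dubins skeletons a priori. To eliminate them I will rely on Fact 2, which forces every $\lambda_\theta=0$ point (including both targets and any straight subsegment) to be collinear on a single line $\ell$, together with the identity $H\equiv 0$, which at any such time pins down $v_o(\cos\theta,\sin\theta)\cdot(\lambda_x,\lambda_y)=-1$ and restricts the tangent direction to at most two specific values. Combining these with the lower bound $\psi>\pi$ on the outer arcs of a CSC (resp.\ all three arcs of a CCC) leads either to a geometric contradiction or to a length strictly dominated by one of $S$, $L_\psi$, $R_\psi$, $L_\psi R_\psi$ or $R_\psi L_\psi$. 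A parallel but easier domination argument rules out the three-segment CSC and CCC skeletons in Cases 6--9, leaving exactly the two-segment forms in the statement.
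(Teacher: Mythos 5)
You have proved the wrong statement. The statement assigned is Fact~\ref{fact4}, but your outline is a proof of Theorem~\ref{theorem:main}: you case-split on whether the departure and arrival angles are interior or boundary points, convert complementary slackness into sign constraints on $\lambda_\theta(0)$ and $\lambda_\theta(T)$, and then \emph{invoke Fact~\ref{fact4} itself} to turn those signs into $L$/$R$/$S$ segment types (``$\lambda_\theta<0$ forces $u=+1$ \dots''). As an argument for Fact~\ref{fact4} this is circular: the claimed relation $u^*=-\mathrm{sign}(\lambda_\theta)$ is used as a black box, never derived. (As a sketch of Theorem~\ref{theorem:main} your plan does track the paper's Lemmas~\ref{lemma:table1} and~\ref{lemma:table2} reasonably well, but that is not what was asked.)

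The actual proof of Fact~\ref{fact4} is immediate from the minimum principle as set up in the paper, and the paper itself simply cites it from the classical treatments of the Dubins problem. The Hamiltonian
\begin{equation*}
H(\Lambda,p,u) = 1 + v_o \lambda_x\cos \theta  + v_o \lambda_y \sin \theta  + \frac{u}{\rho} \lambda_{\theta}
\end{equation*}
is affine in $u$ with coefficient $\lambda_\theta/\rho$. The pointwise condition $H(\Lambda,p,u^*) = \min_{u\in[-1,1]}H(\Lambda,p,u)$ therefore forces the minimizer to sit at an endpoint of $[-1,1]$ whenever $\lambda_\theta(t)\neq 0$: if $\lambda_\theta(t)>0$ the minimum is at $u^*=-1$, and if $\lambda_\theta(t)<0$ it is at $u^*=+1$, i.e.\ $u^*(t)=-\mathrm{sign}(\lambda_\theta(t))$. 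That two-line observation is the entire content of the fact; none of the machinery in your proposal (Facts~\ref{fact1}--\ref{fact3}, the transversality conditions, the Dubins classification) is needed for it.
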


\section{Proof of theorem \ref{theorem:main}}


%
%
%
%

The departure angle in any optimal solution must either be an interior point in $\Theta_1$ or belong to one of the boundary values of $\Theta_1$. Similarly, the arrival angle in any optimal solution must either be an interior point in $\Theta_2$ or belong to one of the boundary values of $\Theta_2$. The combination of choices for the departure angle and arrival angle coupled with the complementary slackness equations in \eqref{eq:dual}-\eqref{eq:lambda0} provides constraints for the values of $\lambda_\theta(0)$ and $\lambda_{\theta}(T)$. These constraints can then be used to find the candidate solutions for solving the Dubins interval problem. The following lemma first shows the relationship between the departure, arrival angles and the constraints on $\lambda_\theta(0)$ and $\lambda_{\theta}(T)$. \\

\begin{lemma}\label{lemma:table1}
The constraints for the adjoint variable $\lambda_\theta$ at times $t=0$ and $t=T$ corresponding to the departure and arrival angles are given in the table below: \\

{{\small
\begin{center}
\begin{tabular}{p{.2in}p{1.5in}p{1.2in}}
\hline
Case No. & Conditions on the departure and arrival angles of any optimal path & Implied constraints on $\lambda_{\theta}(0)$ and $\lambda_{\theta}(T)$  \\ \hline
1 & $\theta(0) \in \Theta_1^o$, $\theta(T)\in \Theta_2^o$ & $\lambda_ {\theta}(0)=0,\lambda_ {\theta}(T)=0$ \\
2 & $\theta(0) = \theta_1^{max} $, $\theta(T)=\theta_2^{max}$ & $\lambda_{\theta}(0) \le 0,\lambda_{\theta}(T) \ge 0$   \\
3 & $\theta(0) = \theta_1^{max} $, $\theta(T)=\theta_2^{min}$ & $\lambda_{\theta}(0) \le 0,\lambda_{\theta}(T) \le 0$  \\
4 & $\theta(0) = \theta_1^{min} $, $\theta(T)=\theta_2^{min}$ & $\lambda_{\theta}(0) \ge 0,\lambda_{\theta}(T) \le 0$  \\
5 & $\theta(0) = \theta_1^{min} $, $\theta(T)=\theta_2^{max}$ & $\lambda_{\theta}(0) \ge 0,\lambda_{\theta}(T) \ge 0$ \\
6 & $\theta(0) = \theta_1^{max} $, $\theta(T)\in \Theta_2^o$ &  $\lambda_{\theta}(0) \le 0,\lambda_ {\theta}(T)=0$ \\
7 & $\theta(0) = \theta_1^{min} $, $\theta(T)\in \Theta_2^o$ &  $\lambda_{\theta}(0) \ge 0,\lambda_ {\theta}(T)=0$ \\
8 & $\theta(0) \in \Theta_1^o$, $\theta(T) = \theta_2^{max}$ &  $\lambda_{\theta}(0) =0,\lambda_ {\theta}(T)\ge 0$  \\
9 & $\theta(0) \in \Theta_1^o$, $\theta(T) = \theta_2^{min}$ &  $\lambda_{\theta}(0) =0,\lambda_ {\theta}(T) \le 0$ \\ \hline  \\
\end{tabular}
\end{center}
}}

\end{lemma}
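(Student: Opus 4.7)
The plan is a direct case analysis using the complementary slackness conditions \eqref{eq:alpha1}--\eqref{eq:beta2} together with the dual sign constraints in \eqref{eq:dual} and the adjoint boundary identities \eqref{eq:lambdaT}--\eqref{eq:lambda0}. The key observation is that the departure angle constraints at $t=0$ only involve $\alpha_1,\alpha_2$ while the arrival angle constraints at $t=T$ only involve $\beta_1,\beta_2$, so the two endpoints decouple: it suffices to carry out a three-way analysis at each of $t=0$ and $t=T$ and then form the nine combinations.

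First I would handle the departure side. If $\theta(0)\in\Theta_1^o$, then both $\theta_1^{min}-\theta(0)<0$ and $\theta(0)-\theta_1^{max}<0$ hold strictly, so \eqref{eq:alpha1} and \eqref{eq:alpha2} force $\alpha_1=\alpha_2=0$, and then \eqref{eq:lambda0} yields $\lambda_\theta(0)=0$. If instead $\theta(0)=\theta_1^{max}$, then $\theta_1^{min}-\theta(0)=\theta_1^{min}-\theta_1^{max}<0$ (using $\theta_1^{min}<\theta_1^{max}$ from the definition of $\Theta_1$), so $\alpha_1=0$, while $\alpha_2\geq 0$ is otherwise unconstrained; hence $\lambda_\theta(0)=-\alpha_2\leq 0$. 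Symmetrically, $\theta(0)=\theta_1^{min}$ gives $\alpha_2=0$ and $\lambda_\theta(0)=\alpha_1\geq 0$.

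Next I would apply the identical three-way dichotomy on the arrival side, using \eqref{eq:beta1}, \eqref{eq:beta2}, and \eqref{eq:lambdaT}: $\theta(T)\in\Theta_2^o$ forces $\beta_1=\beta_2=0$ and thus $\lambda_\theta(T)=0$; $\theta(T)=\theta_2^{max}$ forces $\beta_1=0$ and thus $\lambda_\theta(T)=\beta_2\geq 0$; and $\theta(T)=\theta_2^{min}$ forces $\beta_2=0$ and thus $\lambda_\theta(T)=-\beta_1\leq 0$. Taking the Cartesian product of the three possibilities at each endpoint and reading off the induced signs of $\lambda_\theta(0)$ and $\lambda_\theta(T)$ reproduces the nine rows of the table exactly.

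No serious obstacle is anticipated: the argument is a mechanical unpacking of the Karush--Kuhn--Tucker-type transversality conditions supplied by the minimum principle. The only subtle point worth flagging explicitly in the write-up is the strict inequality $\theta_k^{min}<\theta_k^{max}$ built into the definition of $\Theta_k$, which is what guarantees that fixing the optimal angle at one endpoint of the interval automatically forces the multiplier at the opposite endpoint of that same interval to vanish.
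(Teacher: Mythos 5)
Your argument is correct and follows essentially the same route as the paper: the paper verifies the interior-interior case via complementary slackness and the boundary identities \eqref{eq:lambda0}, \eqref{eq:lambdaT}, and notes the remaining rows follow by the same procedure, which is exactly the three-way endpoint analysis you spell out. Your write-up is simply a more explicit version of the paper's proof, including the correct use of $\theta_k^{min}<\theta_k^{max}$ to kill the opposite multiplier.
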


\begin{proof}
  Consider the first set of conditions, $\theta(0) \in \Theta_1^o$, $\theta(T)\in \Theta_2^o$, in the table. If $\theta(0) \in \Theta_1^o$, then equations \eqref{eq:alpha1} and \eqref{eq:alpha2} imply $\alpha_1=0$ and $\alpha_2=0$ respectively. Using \eqref{eq:lambda0}, we obtain $\lambda_\theta(0)=\alpha_1-\alpha_2=0$. Similarly, if $\theta(T)\in \Theta_2^o$, then equations \eqref{eq:beta1} and \eqref{eq:beta2} imply $\beta_1=0$ and $\beta_2=0$ respectively. Using \eqref{eq:lambdaT}, we obtain $\lambda_\theta(T)=\beta_2-\beta_1=0$. Each of the other constraints in the table can be verified using a similar procedure. Hence proved. \\
\end{proof}

We now use the constraints on $\lambda_{\theta}(0)$ and $\lambda_{\theta}(T)$ to infer the candidate solutions for the Dubins interval problem. \\

\begin{lemma}\label{lemma:00}
Let $\lambda_{\theta}(0)=\lambda_{\theta}(T) = 0$. Then the optimal path for the Dubins interval problem must be either $S$ or $L_\psi$ or $R_\psi$ or $L_\psi R_\psi$ or $R_\psi L_\psi$ with $\psi>\pi$. \\
\end{lemma}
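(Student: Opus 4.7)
The plan is to combine Dubins' classification of shortest bounded-curvature paths with Facts~1--4 to narrow the candidates. An optimal path $P$ must be of Dubins type $CSC$, $CCC$, or a degenerate sub-form, and the hypothesis $\lambda_\theta(0) = \lambda_\theta(T) = 0$ dictates which of these remain admissible.

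My first step is to dispatch any $P$ that contains a non-trivial straight segment $S$. By Fact~1, $\lambda_\theta \equiv 0$ on $S$, and by Fact~2 every point where $\lambda_\theta$ vanishes lies on a single line, which must be the line $\ell$ carrying $S$. Since $\lambda_\theta(0) = 0$, the starting point $p(0)$ lies on $\ell$; however, $\ell$ is tangent at the junction with $S$ to the circle of any curved segment adjacent to $S$, so that circle meets $\ell$ only at the junction, forcing any curve before $S$ to be trivial. The identical argument at time $T$ kills any curve after $S$, so $P = S$ in this case.

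Next I would handle the curve-only cases. A single arc with $\lambda_\theta$ vanishing at both endpoints has angle exceeding $\pi$ by Fact~3, giving $L_\psi$ or $R_\psi$. For a two-arc path $LR$ or $RL$, Fact~1 contributes a zero at the inflection time $t_1$, and Fact~2 places $p(0), p(t_1), p(T)$ on a common line $\ell$. The two equal-radius circles are tangent at $p(t_1)$; if $\ell$ coincided with their common tangent there, each arc would collapse, so $\ell$ is a proper secant of each circle. Using the standard fact that a chord through a point of a circle of radius $\rho$ making angle $\alpha$ with the tangent has length $2\rho\sin\alpha$, the two chords $p(t_1)p(0)$ and $p(t_1)p(T)$ (both lying on $\ell$) have equal length, which together with the equal radii forces the two traversed arcs (the major arc on each circle, by Fact~3) to subtend equal central angles $\psi > \pi$, producing exactly $L_\psi R_\psi$ and $R_\psi L_\psi$.

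The step I expect to be the main obstacle is ruling out the three-arc forms $RLR$ and $LRL$; I treat $RLR$, the other being symmetric. By Facts~1--2, the four zeros $p(0), p(t_1), p(t_2), p(T)$ lie on a common line $\ell$, and by Fact~3 each of the three arc angles exceeds $\pi$. Applying the chord-length equality argument at both tangency points $p(t_1)$ and $p(t_2)$, all three arcs must share a common central angle $\psi = 2\pi - 2\delta$ for some $\delta \in (0, \pi/2)$; this pins down the configuration up to rigid motion, giving (for concreteness) $p(0) = (2\rho\sin\delta, 0)$, $p(T) = (-4\rho\sin\delta, 0)$, $\theta_d = -\delta$, $\theta_a = \delta$, and total length $3(2\pi - 2\delta)\rho$. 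The difficulty is that such a $P$ does satisfy the Pontryagin necessary conditions, so the exclusion has to come from producing a strictly shorter competitor. I would exhibit the $RSR$ path between the same two endpoints using the same $(\theta_d,\theta_a)$: its arcs each have angle $\pi - \delta$ and its straight segment has length $4\rho\sin\delta$, for a total of $(2\pi - 2\delta + 4\sin\delta)\rho$. The difference in length, $4\rho(\pi - \delta - \sin\delta)$, is strictly positive on $(0, \pi/2)$ (the derivative $-1-\cos\delta$ is negative but the value at $\delta=\pi/2$ is already $\pi/2 - 1 > 0$); since the competitor uses the same departure and arrival angles it remains feasible for the intervals $\Theta_1$ and $\Theta_2$, contradicting the optimality of $P$. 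The case $LRL$ is ruled out identically using an $LSL$ competitor.
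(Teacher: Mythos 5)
Your proof is correct, and it follows the same overall skeleton as the paper's (classify by the number of curved segments, use Fact~1 to place zeros of $\lambda_\theta$ at inflexion points and along straight segments, Fact~2 to force collinearity of all zeros, and Fact~3 to force each arc between consecutive zeros to exceed $\pi\rho$, with the two-arc case giving equal arcs $L_\psi R_\psi$ or $R_\psi L_\psi$). Where you genuinely diverge is in handling the three-arc case: the paper simply notes that all three arcs would have to exceed $\pi\rho$ and then cites Bui et al.\ for the fact that such a $CCC$ path cannot be optimal, whereas you make this self-contained by showing the collinearity and chord-length constraints pin the $RLR$ extremal down to the one-parameter family with all arcs equal to $2\pi-2\delta$, and then exhibiting the explicit $RSR$ competitor with the same endpoints and the same boundary headings $(-\delta,\delta)$, whose length deficit $4\rho(\pi-\delta-\sin\delta)>0$ on $(0,\pi/2)$ kills optimality; crucially, you also note the competitor respects the interval constraints since it reuses the same departure and arrival angles, which is the right thing to check because the excluded path does satisfy the Pontryagin conditions. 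You likewise spell out the tangency argument that forces any arc adjacent to a straight segment to degenerate when $\lambda_\theta$ vanishes at the corresponding endpoint, a point the paper asserts tersely here and only argues explicitly (in the same style) in the proof of its Lemma~3. The trade-off: the paper's proof is shorter by delegating to known Dubins-synthesis results, while yours is elementary and verifiable without the external reference, at the cost of an explicit geometric computation.
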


\begin{proof}
An optimal path can just be a straight line from fact \ref{fact2}. From fact \ref{fact3}, a curved segment of length greater than $\pi\rho$ can satisfy the boundary conditions $\lambda_{\theta}(0)=\lambda_{\theta}(T) = 0$. In addition, any path containing three curved segments and satisfying the boundary conditions must have the length of each curved segment (between any two inflexion points) greater than $\pi\rho$; however, as shown in \cite{bui}, such a path cannot be optimal. Therefore, an optimal path may consist of either one or two curved segments with the length of each segment greater than $\pi\rho$. \\

If an optimal path consists of exactly two curved segments, then there are three points ($(x_1,y_1)$, $(x_2,y_2)$ and the inflexion point) where $\lambda_{\theta}(t)$ becomes zero for $t\in [0,T]$. From fact \ref{fact2}, all these three points must lie on the same straight line. Therefore, the length of the first curved segment must be equal to the length of the second curved segment as shown in Fig. \ref{fig:CC} (in this case, $\theta(0)=\theta(T)$). \\
\end{proof}

\begin{figure}[htb]
\centering{}
\includegraphics[width=3in]{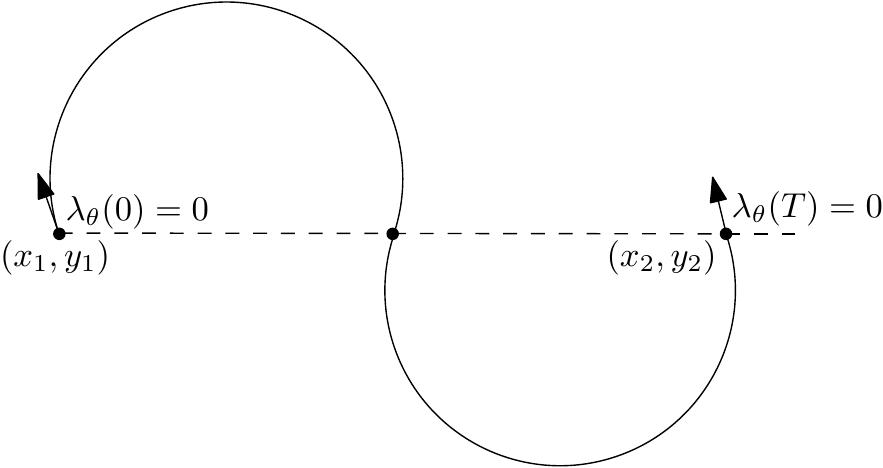}
\caption{A $RS$ path with the boundary values of $\lambda_\theta$ equal to 0.}
\label{fig:CC}
\end{figure}

Note that a set of constraints, say $\lambda_{\theta}(0) \le 0, \lambda_{\theta}(T) \ge 0$, can be satisfied if either $\lambda_{\theta}(0) =\lambda_{\theta}(T) =0$ or $\lambda_{\theta}(0) < 0, \lambda_{\theta}(T) = 0$ or $\lambda_{\theta}(0) < 0, \lambda_{\theta}(T) > 0$ or $\lambda_{\theta}(0) = 0, \lambda_{\theta}(T) > 0$ is satisfied. The following lemma identifies all the candidate solutions for any combination of these constraints on $\lambda_{\theta}(0)$ and  $\lambda_{\theta}(T)$. \\

\begin{lemma}\label{lemma:table2}
Candidate optimal paths for the Dubins interval problem corresponding to the constraints on $\lambda_{\theta}(0)$ and $\lambda_{\theta}(T)$ are given in the table below: \\

{{\small
\begin{center}
\begin{tabular}{p{1.2in}p{1.5in}}
\hline
Constraints on $\lambda_{\theta}(0)$ and $\lambda_{\theta}(T)$ & Candidate solutions \newline $(\psi>\pi)$  \\ \hline
$\lambda_ {\theta}(0)=0,\lambda_ {\theta}(T)=0$ & $S$,$L_\psi$,$R_\psi$,$L_\psi R_\psi$,$R_\psi L_\psi$\\
$\lambda_{\theta}(0) < 0,\lambda_{\theta}(T) = 0$ & $LS, LR_\psi$ with $\psi>\pi$\\
$\lambda_{\theta}(0) < 0,\lambda_{\theta}(T) < 0$ & $LSL, LR_\psi L$ with $\psi>\pi$\\
$\lambda_{\theta}(0) < 0,\lambda_{\theta}(T) > 0$  & $LSR$ \\
$\lambda_{\theta}(0) = 0,\lambda_{\theta}(T) < 0$ & $SL, RL_\psi$ with $\psi>\pi$ \\
$\lambda_{\theta}(0) = 0,\lambda_ {\theta}(T) > 0$ & $SR, LR_\psi$ with $\psi>\pi$\\
$\lambda_{\theta}(0) > 0,\lambda_{\theta}(T) = 0$  & $RS, RL_\psi$ with $\psi>\pi$ \\
$\lambda_{\theta}(0) > 0,\lambda_ {\theta}(T)<0$ & $RSL$\\
$\lambda_{\theta}(0) > 0,\lambda_{\theta}(T) > 0$ & $RSR, RL_\psi R$ with $\psi>\pi$\\
\hline  \\
\end{tabular}
\end{center}
}}
\end{lemma}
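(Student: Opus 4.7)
\emph{Proof plan.}

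Row~1 of the table is exactly the content of Lemma~\ref{lemma:00}, so only the remaining eight rows need to be verified. My plan is to treat each row uniformly: first, read off the type of the initial and final segments from the signs of $\lambda_{\theta}(0)$ and $\lambda_{\theta}(T)$ through Fact~\ref{fact4}; next, enumerate the compatible Dubins templates using the continuity of $\lambda_\theta$ together with Facts~\ref{fact1} and~\ref{fact2}; and finally, apply Fact~\ref{fact3} to extract the $\psi>\pi$ arc-length annotation wherever a curved segment has $\lambda_\theta=0$ at both of its temporal endpoints.

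The mechanism is the same across all cases. On the open interior of an $L$-segment, Fact~\ref{fact4} forces $\lambda_\theta<0$; on the open interior of an $R$-segment, $\lambda_\theta>0$; and on any straight segment, Fact~\ref{fact1} gives $\lambda_\theta\equiv 0$. Thus the sign pattern of $\lambda_\theta$ on $[0,T]$ simply records the succession of segment types, with each inflection between two curved segments appearing as an isolated zero of $\lambda_\theta$. Combined with the Dubins classification (any optimal $C^1$, piecewise $C^2$ path has at most three segments of type $CSC$, $CCC$, or a degeneracy), the boundary signs plus the number of admissible interior zero-crossings determine a short list of candidates in every row. As a representative example, for row~2 (where $\lambda_\theta(0)<0$ and $\lambda_\theta(T)=0$), the first segment must be $L$, and the only admissible second segment is either $S$ (yielding $LS$) or an $R$-segment whose terminal point already has $\lambda_\theta=0$; in the latter the $R$-arc carries a zero of $\lambda_\theta$ at each of its temporal endpoints, so Fact~\ref{fact3} forces its angular length to exceed $\pi$, yielding $LR_\psi$. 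The other seven rows are obtained by the same bookkeeping applied to the corresponding sign patterns.

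The main obstacle will be confirming completeness and correctness of the enumeration for each of the eight sign patterns. I will need to check (i)~that every Dubins template whose first and last segments match the required types is compatible with the prescribed sign of $\lambda_\theta$ in the interior of each segment, (ii)~that the $\pi\rho$ lower bound from Fact~\ref{fact3} is attributed to exactly the curved segment whose two temporal endpoints host zeros of $\lambda_\theta$, and (iii)~that three-segment $CCC$ candidates such as $LR_\psi L$ are properly restricted—Fact~\ref{fact3} forces the middle arc to exceed $\pi\rho$, while the $CCC$-suboptimality argument recalled from~\cite{bui} excludes configurations in which the outer curved segments would simultaneously need to be long. Once this case-by-case verification is carried out, each row of the table follows by direct sign analysis on $\lambda_\theta$.
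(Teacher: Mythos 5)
Your plan follows the paper's proof essentially step for step: row 1 is delegated to Lemma \ref{lemma:00}; for the remaining rows the endpoint signs fix the first and last segment types through Fact \ref{fact4}, Facts \ref{fact1}--\ref{fact2} prune the admissible templates, Fact \ref{fact3} supplies the $\psi>\pi$ annotations, and the $CCC$-suboptimality result of \cite{bui} removes the remaining three-arc paths; your worked example (the row $\lambda_\theta(0)<0$, $\lambda_\theta(T)=0$) is exactly the representative case the paper works out, with the same conclusion $LS$ or $LR_\psi$, $\psi>\pi$.

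One point in your checklist needs correcting before the case-by-case verification will go through. You state that the result recalled from \cite{bui} ``excludes configurations in which the outer curved segments would simultaneously need to be long.'' That is not the criterion the argument actually requires. In the row $\lambda_\theta(0)<0$, $\lambda_\theta(T)=0$ the path $LRL$ must be ruled out; there the zeros of $\lambda_\theta$ sit at the two inflexion points and at $t=T$, so Fact \ref{fact3} forces the middle $R$ arc and the \emph{final} $L$ arc each beyond $\pi\rho$, while the first arc is unconstrained. Only one outer arc is forced to be long, so your criterion as literally stated would not exclude this path (nor the symmetric $RLR$ in the row $\lambda_\theta(0)>0$, $\lambda_\theta(T)=0$, nor the three-arc paths with a long leading arc that arise in the rows with $\lambda_\theta(0)=0$). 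What the paper invokes from \cite{bui} is that a three-arc path in which two of its arcs --- here the middle arc and one outer arc --- each exceed $\pi\rho$ cannot be optimal; note this must be stated for $CCC$ paths specifically, since the two-arc candidate $L_\psi R_\psi$ with $\psi>\pi$ of Lemma \ref{lemma:00} has two long arcs and is retained. A smaller remark: Fact \ref{fact4} gives the implication from the sign of $\lambda_\theta$ to the control, so on an $L$ segment you may only conclude $\lambda_\theta\le 0$ (zeros may occur at interior points of an arc); this does not affect your enumeration, since only the signs at $t=0$ and $t=T$ and the zeros dictated by Facts \ref{fact1}--\ref{fact3} are used.
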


\begin{proof}
The candidate solutions corresponding to $\lambda_\theta(0)=\lambda_\theta(T)=0$ has already been shown in lemma \ref{lemma:00}. Consider the next set of constraints $\lambda_\theta(0)< 0$ and $\lambda_\theta(T) = 0$. If $\lambda_\theta(0)< 0$, the first segment of the path must be $L$ (fact \ref{fact4}). $LSR_\psi$ or $LSL_\psi$ with $\psi>0$ is not possible because this path would violate fact \ref{fact2} unless the length of the straight line is equal to 0. $LRL$ is also not possible because the length of the $R$ segment and the last $L$ segment would each be greater than $\pi\rho$ which then cannot be optimal\cite{bui}. Therefore, the possible candidates are $LS$ or $LR_\psi$ with $\psi >\pi$. Each of the remaining set of the constraints can be shown using a similar procedure. \\
\end{proof}

The theorem follows by combining lemmas \ref{lemma:table1} and \ref{lemma:table2}.

\section{Solution for the Special Case}

In this special case of the Dubins interval problem, the departure angle $\theta_d$ is given while the arrival angle at target 2 must belong to a closed interval $\Theta_2$.

\begin{theorem}\label{theorem:main2}
Any shortest path which is $C^1$ and piecewise $C^2$ of bounded curvature between the two targets with the departure angle $\theta_d$
 at target 1 and the arrival angle $\theta_a\in \Theta_2 = [\theta_2^{min},\theta_2^{max}]$ at target 2 must be one of the following or a degenerate form of these: \\
\begin{enumerate}[{Case} 1:]
\item The path is either $LS$ or $RS$ or $LR_\psi $ or $RL_\psi $ with $\psi >\pi$.

\item $\theta_a=\theta_2^{max}$ and the path is either $LSR$ or $RSR$ $RL_\psi R$ with $\psi >\pi$.
\item $\theta_a=\theta_2^{min}$ and the path is either $RSL$ or $LSL$ $LR_\psi L$ with $\psi >\pi$.
\end{enumerate}
\end{theorem}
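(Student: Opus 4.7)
The plan is to re-run the Pontryagin analysis from the proof of Theorem~\ref{theorem:main}, modifying only the treatment of target~1: the two inequality constraints on $\theta(0)$ are replaced by the single equality $\theta(0)=\theta_d$. This equality carries one scalar Lagrange multiplier with no sign restriction (in place of the two non-negative multipliers $\alpha_1,\alpha_2$), so the transversality relation \eqref{eq:lambda0} places no constraint on $\lambda_\theta(0)$ at all: it may be any real number. The interval constraint at target~2 is unchanged, so applying the argument of Lemma~\ref{lemma:table1} only to $t=T$ still yields exactly three possibilities, namely $\lambda_\theta(T)=0$ when $\theta_a\in\Theta_2^o$, $\lambda_\theta(T)\ge 0$ when $\theta_a=\theta_2^{max}$, and $\lambda_\theta(T)\le 0$ when $\theta_a=\theta_2^{min}$.

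Next I would feed each of these three sign regimes, paired with an arbitrary $\lambda_\theta(0)\in\mathbb{R}$, into the catalogue of Lemma~\ref{lemma:table2}. For $\lambda_\theta(T)=0$, the rows of that table with $\lambda_\theta(T)=0$ (rows 1, 2 and 7) together yield the candidates $S$, $L_\psi$, $R_\psi$, $L_\psi R_\psi$, $R_\psi L_\psi$, $LS$, $LR_\psi$, $RS$, $RL_\psi$; absorbing the first five as degenerate sub-cases of the last four recovers Case~1. For $\lambda_\theta(T)>0$, rows 4, 6 and 9 give $LSR$, $SR$, $LR_\psi$, $RSR$, $RL_\psi R$; treating $SR$ as a degenerate $LSR$ (zero-length leading $L$) and $LR_\psi$ as a degenerate $RL_\psi R$ (zero-length trailing $R$) recovers Case~2. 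The symmetric enumeration on rows 3, 5 and 8 yields Case~3.

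The main obstacle I anticipate is purely bookkeeping: deciding cleanly which of the paths returned by Lemma~\ref{lemma:table2} are genuinely new non-degenerate candidates and which are already present as degenerate limits of paths explicitly listed in the theorem. Boundary situations in which $\lambda_\theta(T)=0$ happens to coincide with $\theta_a$ at an endpoint of $\Theta_2$ are already covered by Case~1, so no candidate is double-counted across the three cases. Beyond this bookkeeping, no new geometric or analytical content is needed; Lemmas~\ref{lemma:table1}--\ref{lemma:table2} and Facts~\ref{fact1}--\ref{fact4} fully suffice.
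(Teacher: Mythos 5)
Your proposal is correct and follows essentially the route the paper itself indicates for Theorem~\ref{theorem:main2}: rerun the Pontryagin argument with the equality constraint at target~1 leaving $\lambda_\theta(0)$ unrestricted, keep the three sign regimes for $\lambda_\theta(T)$ from Lemma~\ref{lemma:table1}, and take unions of the rows of Lemma~\ref{lemma:table2}. One bookkeeping nit: the relevant table entries should read $L_\psi R$ and $R_\psi L$ with $\psi>\pi$ (as in Cases~8--9 of Theorem~\ref{theorem:main}), so in your Case~2 the extra candidate is absorbed as a degenerate $RL_\psi R$ with zero-length \emph{leading} $R$ (and symmetrically $R_\psi L$ as a degenerate $LR_\psi L$ in Case~3), not via a zero-length trailing segment.
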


This theorem can be proved following the same procedure as in the previous section.

\bibliographystyle{IEEEtran}
\bibliography{NSFbib}

\end{document}